\newtheorem{theorem}{Theorem}
\newtheorem{proposition}[theorem]{Proposition}
\newtheorem{cor}[theorem]{Corollary}
\theoremstyle{definition}
\newtheorem{remark}[theorem]{Remark}
\newcommand{\cM}{\mathcal{M}}
\newcommand\cO{\mathcal{O}}
\newcommand\cT{\mathscr{T}}
\renewcommand\AA{\mathbb{A}}
\newcommand\CC{\mathbb{C}}
\newcommand\NN{\mathbb{N}}
\newcommand\PP{\mathbb{P}}
\newcommand\bP{\mathbb{P}}
\newcommand\bG{\mathbb{G}}
\newcommand\QQ{\mathbb{Q}}
\newcommand\RR{\mathbb{R}}
\newcommand\ZZ{\mathbb{Z}}
\DeclareMathOperator{\lct}{lct}
\newcommand{\GIT}{\mathrm{GIT}}
\newcommand{\K}{\mathrm{K}}
\newcommand{\onto}{\twoheadrightarrow} %epimorfismo
\newcommand{\toric}[1]{\mathrm{TV}(#1)} %toric variety
\newcommand{\cTqG}[2]{\mathscr{T}^{\mathrm{qG}, #1}_{#2}}
\newcommand{\DefqG}[1]{\mathrm{Def}^{\mathrm{qG}}(#1)}
\DeclareMathOperator{\Aut}{Aut}
\newcommand{\Gm}{\mathbb{G}_\mathrm{m}}
\DeclareMathOperator{\Proj}{Proj} %spazio proiettivo
\DeclareMathOperator{\Hom}{Hom} %spazio degli omomorfismi
\newcommand{\bQ}{\mathbb{Q}}
\newcommand{\bfA}{\mathbf{A}}
\newcommand{\cX}{\mathcal{X}}
\newcommand{\SL}{\mathrm{SL}}
\newcommand{\bfP}{\mathbf{P}}
\newcommand{\bZ}{\mathbb{Z}}
\newcommand{\Pic}{\mathrm{Pic}}
\newcommand{\sslash}{\mathbin{/\mkern-6mu/}}
\title[On K-stability of some del Pezzo surfaces of Fano index 2]{On K-stability of some del Pezzo surfaces \\ of Fano index 2}
\author{Yuchen Liu}
\address{Department of Mathematics, Northwestern University, Evanston, IL 60208, USA}
\email{yuchenl@northwestern.edu}
\author{Andrea Petracci}
\address{Institut f\"ur Mathematik, Freie Universit\"at Berlin, Arnimallee 3, Berlin 14195, Germany}
\email{andrea.petracci@fu-berlin.de}
\begin{document}

\begin{abstract}
For every integer $a \geq 2$, we relate the K-stability of hypersurfaces in the weighted projective space $\PP(1,1,a,a)$ of degree $2a$ with the GIT stability of binary forms of degree $2a$.
Moreover, we prove that such a hypersurface is K-polystable and not K-stable if it is quasi-smooth.
\end{abstract}

\maketitle

\section{Introduction}

It is an important problem in algebraic geometry and in differential geometry to decide if a given Fano variety $X$ admits a K\"ahler--Einstein (KE) metric. The Yau--Tian--Donaldson (YTD) Conjecture predicts that the existence of a KE metric on $X$ is equivalent to the K-polystability of $X$.
Using Cheeger--Colding--Tian theory, the YTD Conjecture was first proved when $X$ is smooth \cite{chen_donaldson_sun, tian, berman_polystability}, when $X$ is $\bQ$-Gorenstein smoothable \cite{LWX19, SSY16}, or when $X$ has dimension $2$ \cite{ltw_1}. Later, a different method, namely the variational approach, was introduced in \cite{BBJ15}. The analytic side of the variational approach was completed in \cite{ltw_2, Li19} which shows that a $\bQ$-Fano variety $X$, that is, a Fano variety with klt singularities, admits a KE metric if and only if $X$ is reduced uniformly K-stable, a concept introduced in \cite{His16} as an equivariant version of uniform K-stability (see also \cite{XZ19}). Recently, using purely algebro-geometric methods, the work \cite{LXZ21} establishes the equivalence between K-polystability and reduced uniform K-stability. This work, combining with the variational approach, proves the YTD Conjecture for all $\bQ$-Fano varieties.

%with the algebro-geometric input \cite{LXZ21}, the YTD Conjecture is proved for every $\bQ$-Fano variety $X$, that is, a Fano variety with klt singularities. 

%then the existence of a KE metric on $X$ is equivalent to the reduced uniform K-stability of $X$ \cite{BBJ15, ltw_2, Li19}, a concept introduced in \cite{His16} as an equivariant version of uniform K-stability (see also \cite{XZ19}).

K-stability of del Pezzo surfaces which are quasi-smooth hypersurfaces in weighted projective $3$-spaces has been studied extensively.
Johnson and Koll\'ar \cite{kollar_johnson} classified those which are anticanonically polarised (i.e.\ have Fano index $1$) and decided the existence of a KE metric on many of these, by using Tian's criterion which relates KE metrics to global log canonical thresholds (also called $\alpha$-invariants) \cite{tian_alpha, nadel_annals, demailly_kollar, cheltsov_lct, odaka_sano_alpha, Fuj19b}.
This method was applied to most of these del Pezzo surfaces by Araujo~\cite{araujo_del_pezzo}, Boyer--Galicki--Nakamaye~\cite{boyer_galicki_nakamaye}, and Cheltsov--Park--Shramov~\cite{cheltsov_park_shramov_exceptional}.
One case was missing and was finally solved in \cite{cheltsov_park_shramov_delta} by using delta invariants
(see \cite{fujita_odaka_delta, blum_jonsson}).

The (non-)existence of KE metrics on many del Pezzo surfaces which are quasi-smooth hypersurfaces in weighted projective $3$-spaces with Fano index $\geq 2$ has been studied in \cite{cheltsov_park_shramov_exceptional, cheltsov_park_shramov_delta, del_pezzo_zoo, kim_won}.

In this paper, we study K-polystability of quasi-smooth degree $2a$ hypersurfaces in the weighted projective space $\PP(1,1,a,a)$. When $a\in \{2,4\}$, such del Pezzo surfaces are $\bQ$-Gorenstein smoothable, and their K-polystability was determined by Mabuchi--Mukai \cite{MM93} and Odaka--Spotti--Sun \cite{odaka_spotti_sun} (see Remark \ref{rem:a=2or4}). To the authors' knowledge it is not known if they are K-polystable for an integer $a=3$ or $a \geq 5$.
%\footnote{YL: Note that when $a\in \{2,4\}$, this was studied by Mabuchi-Mukai and Odaka-Spotti-Sun respectively. See the final remark.}
In \cite{kim_won} Kim and Won conjecture that these surfaces are K-polystable and not K-stable.

Our main result relates the K-polystability (resp.\ K-semistability) of degree $2a$ hypersurfaces in $\PP(1,1,a,a)$ to GIT polystability (resp.\ GIT semistability) of degree $2a$ binary forms (see \cite[Chapter~4]{git}).

\begin{theorem} \label{thm:k=git}
	Let $a \geq 2$ be an integer and 
	let $\bP(1,1,a,a)$ be the weighted projective space with coordinates $[x, y, z, w]$ with weights $\deg x = \deg y = 1$ and $\deg z = \deg w = a$. Let $X$ be a hypersurface of degree $2a$ in $\bP(1,1,a,a)$.
	
	Then $X$ is K-semistable (resp.\ K-polystable) if and only if, after an automorphism of $\bP(1,1,a,a)$, the equation of $X$ is given by $z^2+w^2+g(x,y)=0$ where $g\neq 0$ is GIT semistable (resp.\ GIT polystable) as a degree $2a$ binary form.
	Moreover, $X$ is not K-stable.
\end{theorem}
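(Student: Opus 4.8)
The plan is to first put $X$ into the stated normal form, then exploit the torus action to reduce K-stability to the Hilbert--Mumford numerical criterion for binary forms. To begin I would analyse $\Aut(\bP(1,1,a,a))$, which is generated by $\GL_2$ acting on the weight-$1$ coordinates $(x,y)$, by $\GL_2$ acting on the weight-$a$ coordinates $(z,w)$, and by the ``translations'' sending $z,w$ to $z,w$ plus a degree-$a$ form in $x,y$. Writing the equation of $X$ as $q(z,w)+z f_a(x,y)+w g_a(x,y)+h_{2a}(x,y)$ with $q$ a binary quadratic in $z,w$, I would show that if $q$ is degenerate (rank $\le 1$), or if $X$ is reducible, then $X$ is not klt, hence not K-semistable, by a local computation of the singularities along the curve $\{x=y=0\}$ (using that K-semistability forces klt). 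When $q$ has rank $2$, diagonalising it to $z^2+w^2$ over $\CC$ and completing the square with the translation automorphisms removes the terms $z f_a+w g_a$, yielding the normal form $z^2+w^2+g(x,y)=0$; the case $g=0$ corresponds to the reducible hypersurface $V(z^2+w^2)$ and is excluded. Thus both sides of the biconditional may be assumed in normal form with $g\neq 0$, and it remains to match K-stability of $X=V(z^2+w^2+g)$ with GIT stability of $g$.

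Next, the substitution $u=z+iw$, $v=z-iw$ rewrites the equation as $uv+g(x,y)=0$, exhibiting the torus $T\cong\Gm$ acting by $u\mapsto tu$, $v\mapsto t^{-1}v$ inside $\Aut(X)$. Since $\dim\Aut(X)\ge 1$, the variety $X$ can never be K-stable, which already settles the final sentence of the theorem (and if $X$ is not K-semistable that sentence is vacuous). For the main equivalence I would invoke equivariant K-stability: to test K-semistability of the $T$-Fano $X$ it suffices to consider $T$-equivariant special test configurations, and the reduced uniform framework handles the distinction between K-polystability and K-stability. I would then prove that such equivariant test configurations are exactly the degenerations $g\rightsquigarrow g_0=\lim_{t\to 0}\rho(t)^{*}g$ induced by one-parameter subgroups $\rho\colon\Gm\to\SL_2$ acting on $(x,y)$ (possibly twisted by a power of $T$), with central fibre $X_0=V(uv+g_0)$; the essential point is that no $T$-equivariant degeneration can move the monomial $uv$, so only the binary form $g$ degenerates.

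The decisive computation is then to identify the Donaldson--Futaki invariant (equivalently Fujita's $\beta$-invariant) of the test configuration $V(uv+\tilde g_t)$ with a positive multiple of the Hilbert--Mumford weight $\mu(g,\rho)$ of $g$ under $\rho$. Granting this, GIT-semistability of $g$ (no root of multiplicity $>a$) becomes equivalent to $\beta\ge 0$ for every $\rho$, hence to K-semistability; and GIT-polystability of $g$ (either strictly stable, or two roots of multiplicity exactly $a$) corresponds to the situation in which equality in $\beta\ge 0$ forces a product configuration, hence to K-polystability. As an anchor for the unique closed strictly semistable orbit I would verify directly that the most degenerate member $X_0=V(uv+x^ay^a)$ is toric, being a binomial hypersurface, and is K-polystable by the barycentre criterion for toric Fanos; this simultaneously pins down the strictly K-polystable locus.

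The hardest part will be the reduction to $T$-equivariant test configurations together with the proof that these coincide precisely with binary-form degenerations, that is, ruling out exotic equivariant degenerations of $X$, and then the explicit calculation showing that the Donaldson--Futaki invariant is a positive multiple of $\mu(g,\rho)$. I expect the forward direction --- manufacturing a destabilising test configuration from a GIT-destabilising $\rho$ whenever $g$ has a root of multiplicity $>a$ --- to be the easier half, while the converse, namely proving that no test configuration has negative invariant when $g$ is GIT-semistable, is where the equivariant reduction and the numerical comparison carry the full weight.
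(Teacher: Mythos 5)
Your plan has a fatal gap at its core, and you have (honestly) flagged exactly where it sits: the reduction of K-(semi/poly)stability testing to the family of test configurations $V(uv+\tilde g_t)$ induced by one-parameter subgroups of $\SL_2$ acting on $(x,y)$, twisted by the torus $T$. Equivariant K-stability does let you restrict attention to $T$-equivariant special test configurations, but those correspond to arbitrary $T$-invariant valuations (equivalently, $T$-equivariant filtrations of the section ring) on $X$, not merely to degenerations coming from one-parameter subgroups of the ambient $\Aut(\bP(1,1,a,a))$; the assertion that ``no $T$-equivariant degeneration can move the monomial $uv$'' is precisely the hard content, and no mechanism is proposed for it. Checking only ambient one-parameter degenerations amounts to verifying GIT stability of the defining equation, which is well known to be strictly weaker than K-semistability in general. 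The paper never attempts such a reduction. Instead, for the ``if'' direction it quotients out your torus: forgetting $w$ exhibits $X$ as a double cover of $\bP(1,1,a)$ branched over $D=(z^2+g=0)$, reducing everything to the log Fano pair $(\bP(1,1,a),\frac{1}{2}D)$ via \cite{LZ20,Zhu20}; K-semistability of that pair is then proved with no test-configuration analysis at all, by combining a conical K\"ahler--Einstein computation on the cone $(\bP(1,1,a),\frac{a-1}{2a}D_0)$ \cite{LL19}, openness of K-semistability, the lct estimate $\lct(\bP(1,1,a);D)\geq\frac{a+2}{2a}$ (coming from $D$ having only $A_{k-1}$-singularities, $k\leq a$, when $g$ is GIT semistable), and interpolation \cite{ADL19}. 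For the ``only if'' direction the paper uses a moduli comparison: the induced map $\phi\colon M^{\GIT}\to M^{\K}$ is shown to be finite and surjective (Picard rank $1$ of $M^{\GIT}$ plus Zariski's main theorem), and GIT semistability of $g$ for a K-semistable $X$ is extracted via special degeneration to a K-polystable point and lower semicontinuity of lct. Without a substitute for this machinery --- for instance a full $\delta$-invariant computation over all $T$-invariant divisorial valuations --- your scheme does not close, and the K-polystable versus K-stable dichotomy (``equality in $\beta\geq 0$ forces a product configuration'') is likewise asserted rather than proved; the paper obtains polystability either from K-\emph{stability} of the pair when $g$ is GIT stable (klt log Calabi--Yau plus interpolation) or from the toric barycentre criterion when $g=x^ay^a$.

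A secondary but genuine error is your normal-form step: the claim that a degenerate quadratic part $q$ forces $X$ to be non-klt is false in general. For example with $a=3$ the hypersurface $z^2+w\,g_3(x,y)+h_6(x,y)=0$ has at $[0,0,0,1]$ a $\boldsymbol{\mu}_3$-quotient of a $D_4$ singularity for generic $g_3$, which is klt; so ruling out rank $\leq 1$ requires disproving K-semistability, which is what you are trying to establish --- a circularity. The paper instead deduces the full-rank normal form for K-semistable $X$ \emph{a posteriori}: $X$ specially degenerates to a K-polystable $X_0$ of the form $z^2+w^2+g_0=0$ (by surjectivity of $\phi$), and since the rank of the quadratic form in $(z,w)$ cannot jump up under degeneration, $X$ itself must already have rank-$2$ quadratic part. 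The parts of your proposal that do survive contact with the paper are the toric anchor (K-polystability of $V(zw-x^ay^a)$ by the barycentre criterion, which is the paper's Proposition~\ref{prop:toric}) and the final sentence of the theorem: the effective $\Gm$-action $t\cdot(z,w)=(tz,t^{-1}w)$ on $zw+g=0$ ruling out K-stability is exactly the paper's argument.
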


As a consequence we prove the K-polystability of quasi-smooth hypersurfaces in $\PP(1,1,a,a)$ of degree $2a$, hence partially confirming \cite[Conjecture~1.3]{kim_won}.

\begin{cor} \label{cor:k-poly}
Let $a \geq 2$ be an integer and let $X$ be a degree $2a$ quasi-smooth hypersurface in $\PP(1,1,a,a)$. Then $X$ is K-polystable and not K-stable. Moreover, $X$ admits a KE metric.
\end{cor}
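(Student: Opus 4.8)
The plan is to deduce the corollary from Theorem~\ref{thm:k=git} by showing that every quasi-smooth $X$ can, after an automorphism of $\bP(1,1,a,a)$, be written in the normal form $z^2+w^2+g(x,y)=0$ with $g$ a binary form of degree $2a$ having $2a$ \emph{distinct} roots, and then checking that such a $g$ is GIT polystable (indeed GIT stable). Since the ``not K-stable'' clause of Theorem~\ref{thm:k=git} holds for \emph{every} degree $2a$ hypersurface in $\bP(1,1,a,a)$, that part is immediate once we know $X$ is of this shape; the content is in producing the normal form and verifying GIT stability.

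First I would write the general degree $2a$ equation as
\[
f = q(z,w) + z\,p(x,y) + w\,r(x,y) + g(x,y),
\]
where $q$ is a binary quadratic in $z,w$, the forms $p,r$ have degree $a$, and $g$ has degree $2a$. The automorphism group of $\bP(1,1,a,a)$ acts on the degree-$a$ piece $\langle z,w,x^a,\dots,y^a\rangle$ and in particular contains $\GL_2$ acting on $(z,w)$ together with the shearing maps $z\mapsto z+\lambda(x,y)$, $w\mapsto w+\mu(x,y)$ for degree-$a$ forms $\lambda,\mu$. The key preliminary observation is that quasi-smoothness forces $q$ to be nondegenerate: if $q$ had rank $\leq 1$, then after diagonalizing one checks that at a point such as $[0:0:0:1]$ all partials of $f$ vanish (here $a\geq 2$ is used so that derivatives of the $x,y$-forms vanish at $x=y=0$), contradicting quasi-smoothness. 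Once $q$ has rank $2$ I diagonalize it to $z^2+w^2$ by a $\GL_2$ on $(z,w)$, and then complete the square via the shearing automorphism $z\mapsto z-\tfrac12 p$, $w\mapsto w-\tfrac12 r$ to kill the linear-in-$(z,w)$ terms, arriving at $f=z^2+w^2+g(x,y)$ with a new degree $2a$ form $g$.

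Next I would compute the quasi-smoothness condition in this normal form. The affine cone is $\{z^2+w^2+g=0\}\subset \mathbb{A}^4$ with gradient $(g_x,g_y,2z,2w)$, which vanishes away from the origin exactly when $z=w=0$ and $g_x=g_y=0$ with $(x,y)\neq 0$; by the Euler relation $xg_x+yg_y=2a\,g$ this automatically puts the point on the cone. A binary form has vanishing gradient at a point of $\bP^1$ precisely at a root of multiplicity $\geq 2$, so $X$ is quasi-smooth if and only if $g$ has $2a$ distinct roots. For such $g$ every root has multiplicity $1$, and since $a\geq 2$ we have $1 < a = \tfrac{2a}{2}$, so Mumford's numerical criterion for binary forms gives that $g$ is GIT stable, in particular GIT polystable and nonzero. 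Theorem~\ref{thm:k=git} then yields that $X$ is K-polystable and not K-stable.

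Finally, for the existence of a Kähler--Einstein metric I would note that $X$ is a del Pezzo surface (by adjunction $-K_X=\cO_X(2)$, of Fano index $2$) with only quotient, hence klt, singularities, so it is a $\bQ$-Fano variety; K-polystability then gives a KE metric by the Yau--Tian--Donaldson correspondence for $\bQ$-Fano varieties cited in the introduction (already available for surfaces by \cite{ltw_1}). The main obstacle I anticipate is the honest bookkeeping in the normal-form step: correctly describing the action of $\Aut(\bP(1,1,a,a))$ on the degree-$a$ graded piece and rigorously ruling out the degenerate quadratic part using quasi-smoothness, after which the GIT computation and the appeal to Theorem~\ref{thm:k=git} are routine.
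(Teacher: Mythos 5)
Your proposal is correct and follows essentially the same route as the paper: reduce to the normal form $z^2+w^2+g(x,y)=0$ via automorphisms of $\bP(1,1,a,a)$, identify quasi-smoothness with $g$ having no multiple roots, note such $g$ is GIT stable (hence polystable), and apply Theorem~\ref{thm:k=git} together with \cite{ltw_1} for the KE metric. The only difference is one of exposition: the paper declares the normal-form equivalence ``clear'' (with the completion-of-the-square in a remark), whereas you carefully supply the missing details --- in particular the argument that quasi-smoothness forces the quadratic part $q(z,w)$ to have rank $2$, via the singular point $[0{:}0{:}0{:}1]$, and the Euler-relation computation identifying quasi-smoothness with simple roots of $g$.
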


Recently the result of this corollary has been independently  announced by Viswanathan using different methods.

It is possible to give a proof of K-polystability for a general hypersurface in $\PP(1,1,a,a)$ of degree $2a$, when $a$ is odd, by analysing the deformation theory of the toric surface appearing in Proposition~\ref{prop:toric} similarly to \cite{ask_petracci} and without using Theorem~\ref{thm:k=git}.

\subsection*{Notation and conventions}
We always work over $\CC$.
A \emph{del Pezzo surface} is a normal projective surface whose anticanonical divisor is $\QQ$-Cartier and ample.
Every toric variety we consider is normal.
We do not even try to write down the definitions of K-(poly/semi)stability of Fano varieties and of log Fano pairs:
we refer the reader to the excellent survey \cite{xu_survey}, the paper \cite{ADL19}, and to the references therein.

%\begin{comment}
\subsection*{Acknowledgements}
The second author wishes to thank Anne-Sophie Kaloghiros for many fruitful conversations and Yuji Odaka for helpful e-mail exchanges;
he is grateful also to Ivan Cheltsov and Jihun Park for useful remarks on an earlier draft of this manuscript and for sharing a preliminary version of \cite{kim_won}. The first author is partially supported by the NSF Grant DMS-2001317.
%\end{comment}

\section{Proofs}

In what follows $a$ is a fixed integer greater than $1$.
We consider the weighted projective space $\PP(1,1,a,a)$ with coordinates $[x, y, z, w]$ with weights $\deg x = \deg y = 1$ and $\deg z = \deg w = a$.

\begin{comment}
\subsection{Some surface singularities}

If $a$ and $b$ are integers such that $1 \leq b < a$ and $\gcd(a,b)=1$, then $\frac{1}{a}(1,b)$ denotes the isolated surface singularity given by the quotient of $\AA^2$ under the action of $\boldsymbol{\mu}_a$ with weights $(1,b)$. It is a toric affine variety.

\subsubsection{} Fix an integer $a \geq 2$.
The singularity $\frac{1}{a}(1,a-1)$ is the hypersurface 
\[
U = \Spec \CC[x,y,z] / (xy-z^a).
\]
It is Gorenstein and its miniversal ($\QQ$-Gorenstein) deformation is given by the equation 
\[
xy - z^a - \sum_{i=2}^a t_i z^{a-i} = 0
\]
over the smooth germ $\CC \pow{t_2, \dots, t_a}$. 
The sheaf $\cTqG{1}{U}$ is supported on the singular point and has length $a-1$.
 $U$ is a toric variety: consider
the action of the torus $(\Gm)^2$ on $U$ given by
\[
(\lambda_1, \lambda_2) \cdot (x,y,z) = ( \lambda_1 x , \lambda_1^{-1} \lambda_2^a y, \lambda_2 z).
\]
Thanks to this action, the vector space $\cTqG{1}{U}$ is a representation of $(\Gm)^2$
and it splits into the direct sum of the irreducible representations of $(\Gm)^2$ which correspond to the characters
\[
(0,-2), \dots, (0,-a) \in \ZZ^2
\]
of $(\Gm)^2$.

\subsubsection{}
Fix an integer $a \geq 3$ such that $a \neq 4$.
The singularity $\frac{1}{a}(1,1)$ is the cone over the rational normal curve of degree $a$ and is $\QQ$-Gorenstein rigid, so its sheaf $\cTqG{1}{}$ is zero.

\subsubsection{}
The base of miniversal $\QQ$-Gorenstein deformation of the singularity $\frac{1}{4}(1,1)$ is smooth and of dimension $1$.
\end{comment}

\begin{proposition} \label{prop:toric}
If $Y$ is the hypersurface in $\PP(1,1,a,a)$ defined by the equation $zw - x^a y^a  = 0$, then $Y$ is a K-polystable toric del Pezzo surface.	
\end{proposition}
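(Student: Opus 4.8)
The plan is to exhibit $Y$ as a $\bQ$-Fano toric surface and then apply the barycenter criterion for K-polystability of toric Fano varieties, using a central symmetry to force the barycenter of the anticanonical moment polytope to be the origin. First I would set up the toric structure. Since $Y$ is cut out by the binomial $zw - x^a y^a$, the subtorus $T = (\Gm)^2$ acting on $\bP(1,1,a,a)$ by $(s,t)\cdot[x:y:z:w] = [x : sy : tz : s^a t^{-1} w]$ preserves $Y$ (both monomials $zw$ and $x^a y^a$ scale by $s^a$) and has dense orbit, so $Y$ is toric. On the chart $\{x \neq 0\}$ one computes $Y \cong \{zw = y^a\}$, the cyclic quotient (Du Val) $A_{a-1}$ singularity at $[1:0:0:0]$, and symmetrically at $[0:1:0:0]$; on $\{z \neq 0\}$ one gets the singularity $\frac{1}{a}(1,1)$ at $[0:0:1:0]$, and symmetrically at $[0:0:0:1]$. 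Thus $Y$ has four isolated cyclic quotient singularities, so it is normal (being a hypersurface it is $S_2$, and it is regular in codimension $1$) and klt. By adjunction $K_Y = (K_{\bP}+Y)|_Y = \cO_Y(-(2a+2)+2a) = \cO_Y(-2)$, so $-K_Y = \cO_Y(2)$ is ample and $\bQ$-Cartier; hence $Y$ is a $\bQ$-Fano toric del Pezzo surface of Fano index $2$.

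The crux is the following symmetry. The linear involution $g \colon [x:y:z:w] \mapsto [y:x:w:z]$ respects the weights and fixes the equation $zw - x^a y^a$, so it descends to an automorphism of $Y$. Restricting $g$ to the dense orbit and using $w = x^a y^a / z$, a direct computation gives $g \colon [1:y:z:w] \mapsto [1 : y^{-1} : z^{-1} : \cdot]$, so $g$ acts on $T$ by inversion $(s,t) \mapsto (s^{-1}, t^{-1})$; equivalently $g$ normalises $T$ and induces $-\mathrm{id}$ on the cocharacter lattice $N$ and on the character lattice $M$. Consequently $g$ preserves the fan of $Y$ (so its set of rays is stable under negation) and preserves $-K_Y$, and therefore the anticanonical moment polytope $P \subset M_{\RR}$ satisfies $P = -P$. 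Being centrally symmetric, $P$ has its barycenter at the origin.

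Finally I would invoke the standard criterion that a $\bQ$-Fano toric variety admits a Kähler--Einstein metric, equivalently is K-polystable, if and only if the barycenter of its anticanonical moment polytope is the origin. This yields that $Y$ is K-polystable. The main point of the argument is the recognition of the involution $g$ as torus inversion, which makes the whole computation uniform in $a$ and avoids writing down the fan explicitly (though one could alternatively compute the polytope and its barycenter by hand). The only delicate step is to apply the criterion in its \emph{polystable} rather than merely semistable form; this is precisely what the central symmetry of $P$ secures, since the resulting equivariant structure excludes any destabilising non-product degeneration.
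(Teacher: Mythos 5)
Your proof is correct, and while it ends with the same key criterion as the paper --- central symmetry of the anticanonical polytope forces the barycentre to be the origin, whence K-polystability by \cite{berman_polystability} --- your route to that symmetry is genuinely different. The paper identifies $Y$ explicitly with the toric variety of the fan $\Sigma$ whose rays are generated by $\pm(a,1)^T$ and $\pm(0,1)^T$, by computing the four generators and the single relation of the graded semigroup algebra $\CC[\tau \cap (M\oplus\ZZ)]$ and matching it with $\CC[x,y,z,w]/(zw-x^ay^a)$; the central symmetry of $P$, and hence of $P^\circ$, is then read off by inspection. You avoid the fan entirely: you exhibit the torus action $(s,t)\cdot[x:y:z:w]=[x:sy:tz:s^a t^{-1}w]$ (both monomials indeed scale by $s^a$, and the orbit of $[1:1:1:1]$ is dense), establish normality and $-K_Y=\cO_Y(2)$ via chart computations, well-formedness and adjunction, and then derive $\Sigma=-\Sigma$ abstractly from the involution $g\colon [x:y:z:w]\mapsto[y:x:w:z]$, which fixes $[1:1:1:1]$ and conjugates the torus by inversion --- your conjugation computation is right, as one checks that $g(s,t)g^{-1}$ sends $[x:y:z:w]$ to $[x:s^{-1}y:t^{-1}z:s^{-a}tw]$, which is the action of $(s^{-1},t^{-1})$ --- so $g$ induces $-\mathrm{id}$ on $N$ and the anticanonical polytope is centrally symmetric with no polytope computation at all. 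What each approach buys: yours is uniform in $a$, more conceptual, and your chart analysis (two $A_{a-1}$ points and two $\frac{1}{a}(1,1)$ points) independently recovers the singularity data; the paper's explicit fan, on the other hand, is not wasted effort, since it is reused in the proof of Proposition~\ref{prop:a=3or>=5} to locate the singularities on cones of $\Sigma$ and to compute the $T_N$-weights of $H^0(\cTqG{1}{Y})$. One cosmetic remark: your closing sentence about the symmetry ``excluding destabilising non-product degenerations'' is superfluous --- the toric criterion in \cite{berman_polystability} is already stated in the polystable form, so once the barycentre vanishes there is nothing further to secure.
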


\begin{proof}
	We fix the lattice $N = \ZZ^2$ and its dual $M = \Hom_\ZZ(N,\ZZ)$. Elements of $N$ will be columns and elements of $M$ will be rows. 
	%Let $\langle \cdot, \cdot \rangle$ denote the duality pairing  $M \times N \to \ZZ$ and also its analogue after tensoring with $\RR$.
	
	Let $Q$ be the convex hull of the points 
	\begin{equation*}
	(0,0), \ (0,1), \ (a^{-1},0), \ ( -a^{-1},1)
	\end{equation*}
     in $M_\RR$.
	Let $\Sigma$ be the inner normal fan of $Q$; thus $\Sigma$ is the complete normal fan in $N$ whose rays are generated by the vectors
	\begin{equation} \label{eq:vertices_of_P}
	\begin{pmatrix}
	a \\ 1
	\end{pmatrix},
	\
	\begin{pmatrix}
	0 \\ 1
	\end{pmatrix},
	\
	\begin{pmatrix}
	-a \\ -1
	\end{pmatrix},
	\
	\begin{pmatrix}
	0 \\ -1
	\end{pmatrix}.
	\end{equation}
	We want to show that $Y$ is the toric variety associated to the fan $\Sigma$.
	
	Provisionally, let $\toric{\Sigma}$ denote the toric variety associated to $\Sigma$.
	Consider the cone $\tau$ in $M \oplus \ZZ$ spanned by $Q \times \{1\}$.
	Consider the finitely generated monoid $\tau \cap (M \oplus \ZZ)$ and the semigroup algebra $\CC[\tau \cap (M \oplus \ZZ)]$, 
	which is $\NN$-graded via the projection $M \oplus \ZZ \onto \ZZ$.
	Toric geometry says that $\toric{\Sigma} = \Proj \CC[\tau \cap (M \oplus \ZZ)]$.
	One can see that the minimal set of generators of the semigroup $\tau \cap (M \oplus \ZZ)$ is made up of the vectors
	\begin{equation*}
	(0,0,1), \ (0,1,1), \ (1,0,a), \ (-1,a,a);
	\end{equation*}
	these vectors satisfy a unique relation:
	\[
	a(0,0,1) + a(0,1,1) = (1,0,a) + (-1,a,a).
	\]
	Hence the $\NN$-graded ring $\CC[\tau \cap (M \oplus \ZZ)]$ coincides with $\CC[x,y,z,w] / (zw - x^a y^a)$, where $\deg x = \deg y = 1$ and $\deg z = \deg w = a$. Therefore $Y = \toric{\Sigma}$.
	
	The vectors in \eqref{eq:vertices_of_P} are the vertices of a polytope $P$ in $N$. This implies that $Y$ is a del Pezzo surface, i.e.\ $-K_Y$ is $\QQ$-Cartier and ample.
	
	Let $P^\circ$ be the polar of $P$; thus $P^\circ$ is the convex hull of $(0, \pm 1)$ and $\pm (\frac{2}{a}, -1)$ in $M_\RR$. 
	The polygon $P^\circ$ is the moment polytope of the toric boundary of $Y$, which is an anticanonical divisor.
	Since $P$ is centrally symmetric, also $P^\circ$ is centrally symmetric, thus the barycentre of $P^\circ$ is the origin. By \cite{berman_polystability} $Y$ is K-polystable.
\end{proof}

\begin{remark}
\begin{enumerate}
 \item Another way to show K-polystability of $Y$ is by realising $Y\cong (\bP^1\times\bP^1)/(\bZ/a\bZ)$, where the $\bZ/a\bZ$-action on $\bP^1\times\bP^1$ is given by 
 \[
  \zeta\cdot ([u_0,u_1],[v_0,v_1]):=([\zeta u_0, u_1],[\zeta^{-1} v_0,v_1])\quad \textrm{with }\zeta=e^{\frac{2\pi i}{a}}.
 \]
 Since the above action is free away from finitely many points, and it preserves the product of Fubini-Study metrics on $\bP^1\times\bP^1$, we know that $Y$ admits a KE metric and hence is K-polystable by \cite{berman_polystability}.

 \item A degree $2a$ hypersurface in $\PP(1,1,a,a)$ is defined by an equation
	\[
	q(z,w) + f(x,y) z + h(x,y) w + g(x,y) = 0
	\]
	where $q$ is a quadratic form, $f$ and $h$ are forms of degree $a$, and $g$ is a form of degree $2a$.
	With an automorphism of $\PP(1,1,a,a)$ which is induced by a linear change of the coordinates $z,w$, we can diagonalise the quadratic form $q$, so that the term $zw$ disappears.
	Furthermore, if $q$ has full rank, with an automorphism of $\PP(1,1,a,a)$ induced by $z \mapsto z + \frac{f}{2}$ and $w \mapsto w + \frac{h}{2}$, the equation becomes
	\[
	z^2 + w^2 + g(x,y) = 0.
	\]
\end{enumerate}
\end{remark}

\begin{proof}[Proof of Theorem~\ref{thm:k=git}]
	We start from the ``if'' part. 
	Suppose $X\subset\bP(1,1,a,a)$ is defined by the equation $z^2+w^2+g(x,y)=0$ with $g\neq 0$. Then the ``if'' part states that $X$ is K-semistable (resp.\ K-polystable) if $g$ is GIT semistable (resp.\ GIT polystable).
	
	By forgetting the $w$-coordinate, we obtain a double cover $\pi: X\to \bP(1,1,a)$ with branch locus $D=(z^2+g(x,y)=0)$. Thus by \cite{LZ20, Zhu20} we know that $X$ is K-semistable (resp.\ K-polystable) if and only if $(\bP(1,1,a),\frac{1}{2}D)$ is K-semistable (resp.\ K-polystable).
	
	Let us assume for the moment that $g$ is an arbitrary degree $2a$ binary form. Denote by $D_0:=(z^2=0)$ as a divisor on $\bP(1,1,a)$.  It is clear that $\bP(1,1,a)$ is the projective cone over $\bP^1$ with polarization $\cO_{\bP^1}(a)$, and $\frac{1}{2}D_0$ is the section at infinity. Since $\bP^1$ is K\"ahler--Einstein,  \cite[Proposition 3.3]{LL19} shows that $(\bP(1,1,a), (1-\frac{r}{2})\frac{1}{2}D_0)$ admits a conical KE metric, where $r\in \bQ_{>0}$ satisfies $\cO_{\bP^1}(a)\sim_{\bQ} -r^{-1} K_{\bP^1}$, i.e. $r=\frac{2}{a}$. By computation, $(1-\frac{r}{2})\frac{1}{2}=\frac{a-1}{2a}$. Thus $(\bP(1,1,a), \frac{a-1}{2a}D_0)$ admits a conical KE metric and hence is K-polystable.
	It is clear that under the $\bG_m$-action $\sigma$ on $\bP(1,1,a)$ given by $\sigma(t)\cdot [x,y,z]=[x,y,tz]$, the log Fano pair $(\bP(1,1,a), \frac{a-1}{2a}D)$ specially degenerates to $(\bP(1,1,a), \frac{a-1}{2a}D_0)$ as $t\to 0$. Thus by openness of K-semistability \cite{BLX19, Xu19} we know that $(\bP(1,1,a), \frac{a-1}{2a}D)$ is K-semistable. 
	
	Next, we assume that $g\neq 0$ is GIT semistable. By GIT of binary forms, we know that each linear factor in $g(x,y)$ has multiplicity at most $a$. In other words, the curve $D$ has only $A_{k-1}$-singularities (i.e.\ locally analytically given by $x^2+y^k=0$) where $k\leq a$. Thus we have that $\lct(\bP(1,1,a);D)\geq \frac{1}{2}+\frac{1}{a}=\frac{a+2}{2a}$. This implies that $(\bP(1,1,a), \frac{a+2}{2a}D)$ is a log canonical log Calabi--Yau pair. Thus interpolation for K-stability \cite[Proposition 2.13]{ADL19} implies that $(\bP(1,1,a), \frac{1}{2}D)$ is K-semistable. 
	
	Next, we assume that $g\neq 0$ is GIT polystable. There are two cases: $g$ is strictly GIT polystable (i.e.\ GIT polystable but not GIT stable), or $g$ is GIT stable. In the first case, under a suitable coordinate we may write $g(x,y)=x^a y^a$. Thus the double cover $X$ is toric, and as shown in Proposition~\ref{prop:toric} $X$ is K-polystable. In the second case, we know that each linear factor in $g(x,y)$ has multiplicity at most $a-1$. Thus the curve $D$ has only $A_{k-1}$-singularities where $k\leq a-1$. Thus we have that $\lct(\bP(1,1,a);D)\geq \frac{1}{2}+\frac{1}{a-1}>\frac{a+2}{2a}$, which implies that $(\bP(1,1,a), \frac{a+2}{2a}D)$ is a klt log Calabi--Yau pair. Thus interpolation for K-stability \cite[Proposition 2.13]{ADL19} implies that $(\bP(1,1,a), \frac{1}{2}D)$ is K-stable. This finishes the proof of the ``if'' part.
	
	Next, we treat the ``only if'' part. In fact, this follows from moduli comparison arguments as in \cite{ADL19}. Let $\bfA:=H^0(\bP^1, \cO_{\bP^1}(2a))$ be the affine space parametrizing degree $2a$ binary forms.
	Let $\bfA^{\rm ss}\subset \bfA\setminus\{0\}$ be the open subset of GIT semistable binary forms.
	Consider the universal family of weighted hypersurfaces $\cX\to \bfA^{\rm ss}$ where $\cX\subset \bP(1,1,a,a)\times \bfA^{\rm ss}$ has fibre $(z^2+w^2+g(x,y)=0)$ over each $g\in \bfA^{\rm ss}$.
	By the ``if'' part we know that each fibre of $\cX\to \bfA^{\rm ss}$ is K-semistable. 
	%\textcolor{red}{(Andrea: I think that here you mean that each fiber is K-semistable, right?)}
	Consider the $(\bG_m\times \SL_2)$-action $\lambda$ on $\bfA$ given by 	$\lambda(t,A)\cdot g(x,y)=t^2 g(A^{-1}(x,y))$. It is clear that $\bfA^{\rm ss}$ is a $(\bG_m\times\SL_2)$-invariant open subset. Then there is a $(\bG_m\times\SL_2)$-action $\tilde{\lambda}$ on $\cX$ as a lifting of $\lambda$ given by 
	\[
	 \tilde{\lambda}(t,A)\cdot ([x,y,z,w],g):=([A(x,y),tz,tw], \lambda(t,A)\cdot g).
	\]
    Denote by $\cM^{\GIT}:=[\bfA^{\rm ss}/(\bG_m\times\SL_2)]$ and $M^{\GIT}:=\bfP\sslash \SL_2$ where $\bfP:=\bP(\bfA)$. It is clear that $M^{\GIT}$ is the good moduli space of $\cM^{\GIT}$. Taking quotient of the family $\cX\to\bfA^{\rm ss}$ by $
    \tilde{\lambda}$, we obtain a $\bQ$-Gorenstein flat family of K-semistable $\bQ$-Fano varieties over $\cM^{\GIT}$, where fibres over closed points are precisely K-polystable fibres. 
    
    From a series of important recent works \cite{Jia17, LWX18, CP18, blum_xu_uniqueness, ABHLX19, Xu19, BLX19, XZ19, XZ20, BHLLX20, LXZ21}, we know that there exists an Artin stack of finite type $\cM_{2,8/a}^{\rm Kss}$ parametrizing K-semistable (possibly singular) del Pezzo surfaces of degree $8/a$. Moreover, $\cM_{2,8/a}^{\rm Kss}$ admits a projective good moduli space $M_{2,8/a}^{\rm Kps}$ parametrizing K-polystable ones. Let $\cM^{\K}$ be the Zariski closure (with reduced structure) of the locally closed substack in $\cM_{2,8/a}^{\rm Kss}$ parametrizing K-semistable degree $2a$ weighted hypersurfaces $X\subset \bP(1,1,a,a)$. Let $M^{\K}$ be the good moduli space of $\cM^{\K}$ as a closed  algebraic subspace of $M_{2,8/a}^{\rm Kps}$. 
    Then the above construction and the ``if'' part produces a morphism $\Phi:\cM^{\GIT}\to \cM^{\K}$ which descends to a morphism $\phi:M^{\GIT}\to M^{\K}$. Since a general weighted hypersurface $X$ has the form $z^2+w^2+g(x,y)=0$ in a suitable coordinate where $g\neq 0$ has no multiple linear factors, we know that $\Phi$ is dominant. The ``if'' part shows that $\Phi$ sends closed points to closed points. Since $M^{\GIT}$ is projective, we know that $\phi$ is proper and dominant, which implies that $\phi$ is surjective. Moreover, since $\SL_2$ has no non-trivial characters, we have injections
    \[
     \Pic(M^{\GIT})=\Pic(\bfP\sslash\SL_2)\hookrightarrow \Pic_{\SL_2}(\bfP^{\rm ss})\hookrightarrow \Pic(\bfP^{\rm ss})
    \]
    by \cite[Proposition 4.2 and Section 2.1]{KKV89}. It is clear that $\bfP\setminus\bfP^{\rm ss}$ has codimension at least $2$ in $\bfP$. Thus we have $\Pic(\bfP^{\rm ss})\cong \Pic(\bfP)\cong \bZ$. In particular, the GIT quotient $M^{\GIT}$ has Picard rank $1$. It is clear that $M^{\K}$ is not a single point. Thus $\phi:M^{\GIT}\to M^{\K}$ is a finite surjective morphism by Zariski's main theorem.
    
    Next, we show that K-poly/semistability implies GIT poly/semistability. Since $\phi$ is surjective,  a K-polystable hypersurface $X\subset\bP(1,1,a,a)$ satisfies that $[X]=\phi([g])\in M^{\K}$ for some GIT polystable binary form $g\in \bfA\setminus\{0\}$. Thus $X$ has the form $z^2+w^2+g(x,y)=0$ with $g\neq 0$ being GIT polystable. If $X\subset\bP(1,1,a,a)$ is K-semistable, then it specially degenerates to a K-polystable point $[X_0]\in M^{\K}$ by \cite{LWX18}. Clearly $X_0$ has the form $z^2+w^2+g_0(x,y)=0$ with $g_0\neq 0$ being GIT polystable. Since the rank of quadratic forms cannot jump up under degeneration, the quadratic terms in $(z,w)$ of the equation of $X$ has rank $2$, which implies that $X=(z^2+w^2+g(x,y)=0)$ for some $g$. By \cite[Corollary 1.7]{Fuj19}, we know that $(\bP(1,1,a), \frac{1}{2}D)$ is K-semistable where $D=(z^2+g(x,y)=0)$. Since $X$ carries a $\bZ/2\bZ$-action given by $w\mapsto -w$, we may assume that the special degeneration from $X$ to $X_0$ is $\bZ/2\bZ$-equivariant by \cite{LZ20, Zhu20}. In particular, this shows that $(\bP(1,1,a), \frac{1}{2}D)$ specially degenerates to $(\bP(1,1,a), \frac{1}{2}D_0)$ where $D_0=(z^2+g_0(x,y)=0)$. By the lower semi-continuity of lct (see e.g.\ \cite{demailly_kollar}), we know that $\lct(\bP(1,1,a); D)\geq \lct(\bP(1,1,a);D_0)\geq \frac{a+2}{2a}$ where the latter inequality was proven in the ``if'' part due to the fact that $g_0$ is GIT polystable. Thus this shows that $g\neq 0$, and each linear factor in $g(x,y)$ has multiplicity at most $a$. Thus we obtain the GIT semistability of $g$.
    The proof of the ``only if'' part is finished.
   
	Finally, we show that any hypersurface $X\subset\bP(1,1,a,a)$ of degree $2a$ is not K-stable. If $X$ were K-stable, then it would have equation $z^2 + w^2 + g(x,y) = 0$, or equivalently the equation $zw + g(x,y) = 0$. It is clear that $t \cdot (z,w) = (tz, t^{-1}w)$ defines an effective action of $\Gm$ on $X$. Thus $X$ is not K-stable by definition.
\end{proof}

	\begin{proof}[Proof of Corollary~\ref{cor:k-poly}]		
		It is clear that $X$ is quasi-smooth if and only if, up to an automorphism of $\PP(1,1,a,a)$, $X$ has the equation $z^2+w^2+g(x,y)=0$ where $g$ has no multiple linear factors. Thus by Theorem~\ref{thm:k=git}  we conclude that $X$ is K-polystable and not K-stable. The existence of KE metrics on $X$ follows from \cite{ltw_1}.
	\end{proof}
	
	\begin{remark}\label{rem:a=2or4}
		For $a=2$, the del Pezzo surface $X$ admits an embedding into $\bP^4$ as a complete intersection of two hyperquadrics. This is induced by the linear system $|-K_X|$ which  is very ample.
		
		For $a=4$, $X$ (as a double cover of $\bP(1,1,4)$) appeared in \cite{odaka_spotti_sun} where it lies in the exceptional divisor of Kirwan blow-up of the GIT moduli space. Hence $X$ admits a $\bQ$-Gorenstein smoothing to degree $2$ smooth del Pezzo surfaces.
		
		Therefore, in both cases ($a=2$ or $a=4$) our K-moduli space $M^{\K}$, introduced in the proof of Theorem~\ref{thm:k=git}, form a divisor in the K-moduli spaces of $\bQ$-Gorenstein smoothable del Pezzo surfaces of degree $\frac{8}{a}$ studied in \cite{MM93, odaka_spotti_sun}.
		We will see in Proposition~\ref{prop:a=3or>=5} what happens for $a=3$ or $a \geq 5$. 
	\end{remark}

\begin{proposition} \label{prop:a=3or>=5}
	If $a=3$ or $a \geq 5$, then the locus of K-polystable degree $2a$ hypersurfaces in $\PP(1,1,a,a)$ is a connected component of
	$M_{2,8/a}^{\rm Kps}$.
\end{proposition}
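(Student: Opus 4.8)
The plan is to show that $M^{\K}$ is simultaneously closed, open, and connected in $M_{2,8/a}^{\rm Kps}$; since a closed, open, connected subset is a connected component, this suffices. Connectedness is immediate: $M^{\K}$ is the image under the surjective morphism $\phi$ (constructed in the proof of Theorem~\ref{thm:k=git}) of the irreducible variety $M^{\GIT}=\bfP\sslash\SL_2$, hence irreducible and in particular connected. Closedness also holds by construction, since $M^{\K}$ was defined there as the good moduli space of the Zariski-closed substack $\cM^{\K}\subset\cM_{2,8/a}^{\rm Kss}$. All the content is therefore in the openness of $M^{\K}$.

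To prove openness I would invoke the étale-local description of the K-moduli space at a K-polystable point $[X]$: a neighbourhood of $[X]$ in $M_{2,8/a}^{\rm Kps}$ is the good moduli space of $[V/\Aut(X)]$, where $V$ is the K-semistable locus of the miniversal $\QQ$-Gorenstein deformation space $\DefqG{X}$ (this is the local structure theory underlying the works cited in the proof of Theorem~\ref{thm:k=git}; see also \cite{ADL19}). It therefore suffices to prove that, for every surface $X$ in our family, the natural comparison map from the local Hilbert functor of $X\subset\PP(1,1,a,a)$ to $\DefqG{X}$ is \emph{smooth}: then every $\QQ$-Gorenstein deformation of $X$ is again a degree $2a$ hypersurface in $\PP(1,1,a,a)$, so the entire local model lies in $\cM^{\K}$ and a neighbourhood of $[X]$ lies in $M^{\K}$.

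The key computation, and the point where the hypothesis $a=3$ or $a\geq 5$ enters, is the following. By Theorem~\ref{thm:k=git} we may assume $X=(z^2+w^2+g(x,y)=0)$ with $g\neq 0$ GIT polystable. Its singular points are the two points of type $\frac{1}{a}(1,1)$ on the singular locus $\{x=y=0\}$ of $\PP(1,1,a,a)$, together with one $A_{k-1}$ surface singularity (locally $z^2+w^2+s^k=0$) over each root of $g$ of multiplicity $k$, where necessarily $2\le k\le a$. For $a=3$ or $a\geq 5$ the singularity $\frac{1}{a}(1,1)$ is the cone over a rational normal curve of degree $a\neq 4$ and is $\QQ$-Gorenstein rigid; hence $\cTqG{1}{X}$ vanishes at these two points and is supported only at the $A_{k-1}$ points. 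Each such point, being du Val, has unobstructed deformations with $T^1$ of dimension $k-1$, and all of its smoothing directions are induced by varying the coefficients of $g$. Feeding this into the local-to-global comparison for $\QQ$-Gorenstein deformations, encoded by the exact sequence
\[
H^1(X,T_X)\longrightarrow \TTqG{1}{X}\longrightarrow H^0(X,\cTqG{1}{X})\longrightarrow H^2(X,T_X),
\]
should show that the comparison map is surjective on tangent spaces and, by unobstructedness, smooth (the $H^1(X,T_X)$ term accounting for the equisingular hypersurface deformations, exactly as in the quasi-smooth case).

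The main obstacle I expect is precisely the verification of this smoothness at the strictly GIT polystable members, where $X$ genuinely carries the $A_{k-1}$ singularities: one must check that every first-order $\QQ$-Gorenstein smoothing of these du Val points really comes from a deformation of the binary form $g$ rather than from a deformation escaping $\PP(1,1,a,a)$, and that no obstruction in $H^2(X,T_X)$ prevents lifting. The rigidity of the two $\frac{1}{a}(1,1)$ points is what closes off the only other potential deformation directions; this is exactly why the argument works for $a=3$ and $a\geq 5$ and breaks down for $a=2,4$, where $\frac{1}{a}(1,1)$ is $\QQ$-Gorenstein smoothable and the family becomes a divisor rather than a component of a larger K-moduli space (Remark~\ref{rem:a=2or4}). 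Once openness is established, combining it with closedness and connectedness yields that $M^{\K}$ is a connected component of $M_{2,8/a}^{\rm Kps}$.
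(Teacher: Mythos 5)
Your global topology (``closed $+$ open $+$ connected $\Rightarrow$ connected component'') is sound, and closedness and connectedness are indeed immediate from the construction of $M^{\K}$ and the surjection $\phi$. But there is a genuine gap exactly where you flag it: openness is never actually proved. Your plan reduces openness to the smoothness of the comparison map $\mathrm{Hilb} \to \DefqG{X}$ at \emph{every} K-polystable member, and this is asserted rather than established. Two concrete difficulties remain. First, smoothness requires the embedded deformations to surject not only onto $H^0(\cTqG{1}{X})$ (the local $T^1$'s of the $A_{k-1}$ points, which does follow from interpolation of binary forms) but also onto the $H^1(\cT^0_X)$-part of $\TTqG{1}{X}$, which is nonzero away from the toric member; ``exactly as in the quasi-smooth case'' is not an argument, since the quasi-smooth statement itself requires proof. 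Second, the natural tool for such a statement --- the Euler sequence on the smooth Deligne--Mumford stacks $\mathcal{X} \subset \mathcal{P}$ over $X \subset \PP(1,1,a,a)$, using $\DefqG{X} = \mathrm{Def}(\mathcal{X})$ via canonical covering stacks --- works directly only when $a$ is odd: for even $a$ the index-one cover of $\frac{1}{a}(1,1)$ is not smooth, so $\mathcal{X} \to X$ is not the canonical covering stack at those two points and $\mathrm{Def}(\mathcal{X})$ does not obviously compute $\DefqG{X}$; one would have to exploit the $\QQ$-Gorenstein rigidity of $\frac{1}{a}(1,1)$ to patch local triviality there into the global comparison. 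None of this looks impossible, but it is the actual mathematical content of your route, and it is missing.

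It is worth seeing how the paper sidesteps pointwise openness entirely. It takes $\Gamma$ to be the connected component containing $M^{\K}$, gets $\dim M^{\K} = \dim M^{\GIT} = 2a-3$ from the finite surjection $\phi$, and then computes the dimension of $\Gamma$ at the \emph{single} toric point $Y$ (given by $zw = x^ay^a$), where equivariant toric deformation theory is fully explicit: $H^1(\cT^0_Y)=H^2(\cT^0_Y)=0$, so the tangent space is $H^0(\cTqG{1}{Y})$, of dimension $2a-2$ with torus weights $(0,\pm 2),\dots,(0,\pm a)$ --- this is where the hypothesis $a=3$ or $a\geq 5$ enters, killing the contribution of the two $\frac{1}{a}(1,1)$ points --- and the \'etale-local model $H^0(\cTqG{1}{Y})/\Aut(Y)$ has dimension $2a-3$. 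Hence $M^{\K}$ is an irreducible component of $\Gamma$; since all members have cyclic quotient singularities, $\QQ$-Gorenstein deformations are unobstructed, the K-moduli stack is smooth near $\cM^{\K}$, so $\Gamma$ is normal near $M^{\K}$, and an irreducible component of a connected normal space is the whole space. In short: where your route needs a smoothness-of-comparison statement at every polystable member (with the even-$a$ subtlety above), the paper needs one weight computation at one maximally degenerate point plus a normality argument. If you want to complete your approach, prove the smoothness claim along the lines indicated; otherwise the dimension-count-plus-normality argument is substantially shorter.
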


\begin{proof}
We denote by $\Gamma$ the connected component of $M_{2,8/a}^{\rm Kps}$ containing K-polystable degree $2a$ hypersurfaces in $\PP(1,1,a,a)$.
In the proof of Theorem~\ref{thm:k=git} we showed that the locus of K-polystable degree $2a$ hypersurfaces in $\PP(1,1,a,a)$ is closed in $\Gamma$; this locus is denoted by $M^{\K}$.
We need to prove that  $M^{\K}$ coincides with $\Gamma$. We will achieve this by a dimension count. Using the notation of the proof of Theorem~\ref{thm:k=git}, there is a finite surjective morphism $\phi: M^{\GIT}\to M^{\K}$. Thus we have
\[
\dim M^{\K} = \dim M^{\GIT} = \dim \bfP - \dim \SL_2 = 2a-3.
\]
%\textcolor{red}{(Andrea: here I think we should say or prove that $\phi : M^\GIT \to M^\K$ is generically finite, so that $\dim M^\K = \dim M^\GIT$. Do you know a direct argument for that? In my version on the arXiv I have a a deformation theoretic argument which proves this proposition, but it works only for odd $a$)}

Let us now compute the dimension of $\Gamma$ by analysing the deformation theory of the K-polystable toric del Pezzo surface $Y$ introduced in Proposition~\ref{prop:toric}. Note that a similar study was discussed in \cite{MGS21}.

\medskip

Let $\cT^0_Y$ denote the sheaf of derivations on $Y$, i.e.\ the dual of $\Omega_Y^1$.
Let $\cTqG{1}{Y}$ denote the sheaf of $1$st order $\QQ$-Gorenstein deformations of $Y$.
The singular locus of $Y$, which consists of $4$ points, contains the set-theoretic support of $\cTqG{1}{Y}$.

Since $Y$ is a toric Fano, we have $H^1(\cT^0_Y) = H^2(\cT^0_Y) = 0$ by \cite[\S4.3]{petracci_survey}. Via a standard argument about the local-to-global spectral sequence for Ext, we deduce that the tangent space of the $\QQ$-Gorenstein deformation functor of $Y$ is $H^0(\cTqG{1}{Y})$.
The $\QQ$-Gorenstein deformation functor of $Y$ is unobstructed because $Y$ is a del Pezzo surface with cyclic quotient singularities \cite[Lemma~6]{procams}. Therefore the germ at the origin of the vector space $H^0(\cTqG{1}{Y})$ is the base of the miniversal (Kuranishi) $\QQ$-Gorenstein deformation of $Y$.

Consider the torus $T_N = N \otimes_\ZZ \Gm$ acting on the toric variety $Y$.
There is an action of $T_N$ on the vector space $H^0(\cTqG{1}{Y})$, hence $H^0(\cTqG{1}{Y})$ splits into the direct sum of irreducible representations (characters) of the torus $T_N$.

We observe that the singularities of $Y$ are:
\begin{itemize}
	\item $2$ points of type $\frac{1}{a}(1,-1) = A_{a-1}$, which correspond to the cones in $\Sigma$ spanned by
	\[
	\pm \begin{pmatrix}
	a \\ 1
	\end{pmatrix},
	\
	\pm \begin{pmatrix}
	0 \\ 1
	\end{pmatrix},
	\]
	\item $2$ points of type $\frac{1}{a}(1,1)$, which correspond to the cones in $\Sigma$ spanned by
	\[
	\pm \begin{pmatrix}
	a \\ 1
	\end{pmatrix},
	\
	\mp \begin{pmatrix}
	0 \\ 1
	\end{pmatrix}.
	\]
\end{itemize}
Since $a=3$ or $a \geq 5$, the surface singularity $\frac{1}{a}(1,1)$ is $\QQ$-Gorenstein rigid, so it does not contribute to $H^0(\cTqG{1}{Y})$. One can see that the $T_N$-representation $H^0(\cTqG{1}{Y})$ is the direct sum of the $1$-dimensional representation of $T_N$ associated to the characters
\begin{equation} \label{eq:weights}
(0, \pm 2), (0, \pm 3), \dots, (0, \pm a) \in M.
\end{equation}
In particular $\dim H^0(\cTqG{1}{Y}) = 2a-2$, so the base of the miniversal $\QQ$-Gorenstein deformation of $Y$ is a smooth germ of dimension $2a-2$.

Since the weights in \eqref{eq:weights} are contained in a rank $1$ sublattice of $M$, there exists a $1$-dimensional subtorus of $T_N$ which acts trivially on $H^0(\cTqG{1}{Y})$. More precisely one can prove that the affine quotient $H^0(\cTqG{1}{Y}) / T_N$ has dimension $2a-3$.

Since every facet of the polytope $P^\circ$ has no interior lattice points, by \cite[Proposition~2.6]{ask_petracci} the automorphism group of $Y$ is $T_N \rtimes \Aut(P)$, where $\Aut(P) \subseteq \mathrm{GL}(N)$ is the finite group consisting of the lattice automorphisms which keep the polytope $P$ invariant. Since the difference between $T_N$ and $\Aut(Y)$ is just a finite group, we deduce that the affine quotient the affine quotient $H^0(\cTqG{1}{Y}) / \Aut(Y)$ has dimension $2a-3$.
By the local structure of the K-moduli space \cite{ABHLX19, luna_etale_slice_stacks} we know that the completion of the local ring of $\Gamma$ at $[Y]$ coincides with the completion at the origin of $H^0(\cTqG{1}{Y}) / \Aut(Y)$. This proves that $\Gamma$ has dimension $2a-3$ at $[Y]$. Since $\dim M^{\K}=2a-3$, we know that $M^{\K}$ is an irreducible component of $\Gamma$. 

Moreover, since all K-polystable del Pezzo surfaces in $M^{\K}$ have cyclic quotient singularities by Theorem \ref{thm:k=git}, they have unobstructed $\bQ$-Gorenstein deformations by \cite[Lemma~6]{procams}. Thus the stack $\cM_{2,\frac{8}{a}}^{\rm Kss}$ is smooth in an open neighbourhood of $\cM^{\K}$. In particular, this implies that $\Gamma$ is normal in an open neighbourhood of $M^{\K}$. Since $M^{\K}$ is an irreducible component of $\Gamma$, we have $M^\K = \Gamma$.
\end{proof}

\bibliographystyle{alpha}
\bibliography{Biblio_final}

\end{document}